\newcommand{\sat}{\text{sat}}
\newcommand{\ex}{\text{ex}}
\newcommand{\F}{\mathcal{F}}
\newcommand{\M}{\mathcal{M}}
\renewcommand{\S}{\mathcal{S}}
\title{Saturation Numbers for Minors}
\author{Max Aires \\ 
Dept.\ Math.\ Sciences \\ Carnegie Mellon University \\ Pittsburgh, PA\\ 
maires@andrew.cmu.edu}
\begin{document}

\maketitle

\begin{abstract}
\noindent
The saturation number $\sat(n,\F)$ is the minimum number of edges in any graph which does not contain a member of $\F$ as a subgraph, but will if any edge is added. We give a few upper and lower bounds for saturation numbers for minors. In particular, we shall show that certain Generalized Petersen Graphs are $K^r$-minor saturated for $6\le r\le 8$.
\end{abstract}

\section{Introduction}

Let $\F$ be a family of graphs. We say a graph $G$ is $\F$-saturated if it does not contain a member of $\F$ as a subgraph, but $G+e$ does for any edge $e\notin G$. The saturation number $\sat(n, \F)$ represent the minimum number of edges in an n-vertex $\F$-saturated graph. This can be seen as a corresponding lower bound to the classical extremal number $\ex(n,\F)$. Let $\M(H)$ be the class of graphs containing $H$ as a minor. The main purpose of this paper is to derive some basic results about about $\sat(n,\M(H))$. While there has been a large amount of work investigating extremal numbers for $\M(H)$, there has been almost none on the corresponding saturation number.

Since the saturation number is upper bounded by the extremal number, a major result from Thomasson tells us that $\sat(n,\M(H))\le\ex(n,\M(H))\le (\al |V(H)|\s{\log(|V(H)|})n + O(1)$ where $\al$ is a fixed constant \cite{Th}. On the other hand, if $H$ is connected and contains at least $3$ vertices, then it follows that $\sat(n,\M(H))\ge \f{1}{2}(n-1)$, as no $\M(H)$-saturated graph can contain more than one isolated vertex (or else the edge between two isolated vertices could be added without creating a minor of $H$). Hence the main question about the saturation numbers for minors lies in determining the value of the linear constant (if such a limit even exists). One difficulty however is that for a subgraph $H'\subseteq H$, it is not necessarily true that $\sat(n,\M(H'))\le\sat(n,\M(H))$; thus it is not sufficient to study complete graphs to upper bound saturation numbers. 

While the saturation number for minors remains almost unexplored, we can nonetheless deduce some easy resulting from existing work on saturation number for other classes. Since a graph contains the path $P^r$ as a minor if and only if it contains it as a subgraph, we have $\sat(n,\M(P^r))=\sat(n,P^r)\le n+O(1)$~\cite{KT}. Now let $\S(H)$ be the family of subdivisions of $H$; it follows that $\sat(n,\M(C^r))=\sat(n,\S(C^r))$. It is known that there exists a constant $c$ so that $\f{5}{4}n\le\sat(n,\S(C^r))\le (\f{5}{4}+\f{c}{r^2})n+O(1)$~\cite{FJMTW}. Finally, $\sat(n,\M(K^3))=n-1$, $\sat(n,\M(K^4))=2n-3$, and $\sat(n,\M(K^5))=\f{11}{6}n+O(1)$; these values are easily obtained via well known characterizations of graphs which don't contain $K^3$, $K^4$, or $K^5$ as minors.

In this paper, we shall show some general upper and lower bounds for the saturation number for minors of minimum degree at least $3$. We shall also give examples of saturated graphs which give good upper bounds for $\sat(n,\M(K_{1,r}))$ and for $\sat(n,\M(K^r))$ for $6\le r\le 8$.
These examples constructions are particularly interesting as they provide a direct contrast with the saturation numbers for regular subgraphs and subdivisions. We shall also compute $\sat(n,\M(K_{3,3}))$.

\section{Main Results}

We start out with a general lower bound on $\sat(n,\M(H))$.

\begin{thm}[Lower Bounds] If $H$ has $\de(H)\ge 3$ then $\sat(n, \M(H))\ge \f{3}{2}n$ for all $n\ge 4$. Furthermore, if $H$ has $\de(H)\ge 4$ and is also triangle-free, then we can improve this to $\sat(n,\M(H))\ge 2n$ for $n\ge 5$.
\end{thm}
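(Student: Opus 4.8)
The plan is to fix an $\M(H)$-saturated graph $G$ and squeeze a lower bound on its average degree out of the defining saturation property. The basic principle I would record first is a \emph{branch-set degree bound}: for any non-edge $vw$, the graph $G+vw$ has an $H$-minor while $G$ does not, so every $H$-minor model of $G+vw$ must use $vw$, and in fact $vw$ must be the \emph{unique} edge joining the two branch sets it connects (otherwise deleting it would leave a model inside $G$). Hence $v$ and $w$ lie in distinct branch sets, and since $\delta(H)\geq 3$ every branch set sends at least $3$ edges to other branch sets. The bound $\sat(n,\M(H))\geq \frac{3}{2}n$ is equivalent, after assigning each vertex the charge $\deg(v)-3$, to showing the total charge $2e(G)-3n$ is nonnegative, so the whole argument reduces to eliminating vertices of degree $\leq 1$ and to controlling vertices of degree exactly $2$. (The small values of $n$ near the stated threshold I would verify by hand, since the interesting regime is $n$ large relative to $|V(H)|$.)

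The cleanest concrete step, and the one I would carry out in full, is $\delta(G)\geq 2$. An isolated vertex $v$ is impossible, since in any model of $G+vw$ its branch set is $\{v\}$ with at most one crossing edge, violating $\delta(H)\geq 3$. For a pendant $v$ with unique neighbor $u$: in a model of $G+vw$ the edge $vw$ is crossing, so $w$ leaves $v$'s branch set $B$, forcing $u\in B$ (else $B=\{v\}$ has only the two crossing edges $vu,vw$, again $\le 2<3$). Contracting the internal edge $uv$ keeps the model and turns $vw$ into $uw$, so $(G-v)+uw$ has an $H$-minor. Choosing $w\in N(u)\setminus\{v\}$, which exists precisely when $\deg(u)\geq 2$, makes $uw\in E(G)$, whence $H\preceq G-v\preceq G$, a contradiction. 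Thus $\deg(u)=1$ and $\{u,v\}$ is an isolated $K_2$; but adding any edge $ux$ to another vertex leaves $\{u,v\}$ a branch blob of minor-degree $\le 1$, so no model can use $ux$, contradicting saturation. Hence $G$ has no pendant vertices and $\delta(G)\geq 2$.

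The remaining, and hardest, part is the discharging that upgrades $\delta(G)\geq 2$ to average degree $3$. Each degree-$2$ vertex carries charge $-1$ and must be paid for by the surplus $\deg(v)-3$ of higher-degree vertices; the extremal picture to keep in mind is $K_{2,n-2}$ with an added edge, where all degree-$2$ vertices attach to two vertices of degree $\sim n$. The structural lemma I expect to be the main obstacle is showing that degree-$2$ vertices cannot cluster so as to starve this transfer: concretely, that long induced paths of degree-$2$ vertices do not occur and that a degree-$2$ vertex cannot have both neighbours of low degree. A run of consecutive degree-$2$ vertices is a subdivided edge, and subdivision never helps create a minor of a graph with $\delta\ge 3$; I would make this precise by a contraction/rerouting argument in the spirit of the pendant case, contracting the run and transferring a model of $G+e$ into $G$ to contradict $H$-minor-freeness (isolated cycles are excluded the same way, as they are never $\M(H)$-saturated). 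Once runs are bounded and each degree-$2$ vertex is anchored at a high-surplus vertex, a rule sending $\tfrac12$ across each edge from a degree-$2$ vertex to its neighbours closes the count.

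For the second statement I would run the same machinery with charge $\deg(v)-4$, so that the goal becomes average degree $4$, i.e.\ $e(G)\geq 2n$. The stronger hypothesis $\delta(H)\geq 4$ improves the branch-set bound to $4$ crossing edges, and the argument of the previous paragraph then upgrades to $\delta(G)\geq 3$ (a vertex of degree $\leq 2$ either sits in a singleton branch set of minor-degree $\leq 3<4$, or is contractible as before). This already gives $\frac32 n$; the extra factor comes from triangle-freeness, which I would use to rule out the cheap degree-$3$ configurations. When $v$ has degree $3$ and its branch set is $\{v\}$, the added edge realises an $H$-vertex $a$ whose four neighbours are carried by four pairwise distinct branch sets containing $u_1,u_2,u_3,w$; triangle-freeness of $H$ forbids any two of these branch sets from being adjacent, which prevents the degree-$3$ vertices from being locally dense and forces the compensating surplus. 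Verifying that this no-triangle constraint is exactly strong enough to balance the discharging to $2n$ is the analogue of, and the main difficulty in, the second half.
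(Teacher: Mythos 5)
Your reduction to $\delta(G)\ge 2$ is essentially correct and matches the degree-$1$ step of the paper's argument (though note that ``$v$ and $w$ lie in distinct branch sets'' is not automatic: the forced edge $vw$ could a priori be an internal edge of a single branch set, a case you should dispose of explicitly). The real problem is that the heart of the theorem --- accounting for vertices of degree $2$, and in the second part degree $3$ --- is left as an unexecuted discharging plan resting on structural lemmas you neither state precisely nor prove. The rule ``send $\frac{1}{2}$ across each edge incident to a degree-$2$ vertex'' fails outright if a degree-$2$ vertex has a neighbour of degree $3$ (which carries no surplus), and nothing in the proposal excludes that configuration; likewise ``a degree-$2$ vertex cannot have both neighbours of low degree'' is exactly the kind of claim that needs a proof and is not supplied. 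Similarly, the assertion that $\delta(H)\ge 4$ upgrades the conclusion to $\delta(G)\ge 3$ is not justified by the pendant-vertex contraction (that argument used the existence of a second neighbour of $u$ already joined to $u$ by an edge of $G$), and saturated graphs in this setting can in fact contain degree-$2$ vertices sitting inside triangles. As written, neither inequality is established.

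The missing idea, which makes the discharging apparatus unnecessary, is to work with a counterexample $G$ minimizing $n$ and to show that a low-degree vertex can be \emph{deleted} while preserving saturation. If $\deg(v)=2$ with neighbours $x,y$ nonadjacent, then in a model of an $H$-minor in $G+xy$ the vertex $v$ cannot form a singleton branch set (it would have at most $2<\delta(H)$ crossing edges), so $v$ shares a branch set with, say, $x$; but then $xy$ is parallel to $vy$ as a connection between the two branch sets, so the model already lives in $G$, a contradiction. Hence $x\sim y$, and $G-v$ is again $\mathcal{M}(H)$-saturated because any model in $G+e$ can avoid $v$ (its crossing edges are duplicated through $x$ or $y$); thus $G-v$ is a smaller counterexample with $n-1$ vertices and $m-2<\frac{3}{2}(n-1)$ edges. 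The second statement runs the same induction one degree higher: for $\deg(v)=3$ with neighbours $x,y,z$ and $x\not\sim y$, the redundancy argument forces $z$ into $v$'s branch set, and triangle-freeness of $H$ guarantees that one of the branch sets of $x$ or $y$ can absorb $v$, again producing the minor inside $G$. Your branch-set degree bound and contraction technique are the right tools; they need to be aimed at a minimal counterexample rather than at a global charge count.
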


\begin{proof}

We shall first show that $\de(H)\ge 3$ implies $\sat(n,\M(H))\ge \f{3}{2}n$. Suppose for the sake of contradiction the claim were false, and let $G$ be a minor-minimal counterexample on $n\ge 4$ vertices and $m<\f{3}{2}n$ edges. Then the average degree of $G$ is less than $3$, so there exists a vertex $v\in G$ with $\deg(v)\le 2$. If $\deg(v)=1$, and its neighbor is $w$, then $w$ must have another neighbor, and we can connect $v$ to that neighbor and still have a $H$-minor-free graph. Now suppose $\deg(v)=2$, and that its neighbors are $x$ and $y$. If $xy$ are not adjacent, then $G+xy$ must contain an $H$ minor, so we can partition the vertices of $G$ into a connected group for each vertex in $H$ with an edge between components in $G$ for each edge in $H$. Since $\de(H)\ge 3$, $v$ can not be in its own group, so without loss of generality we can assume $x$ is in the same group as $v$. Then the edge $xy$ is redundant with the edge $vy$, so the minor also exists in $G$. Hence $x$ and $y$ are adjacent. Then $G-v$ is also $\M(H)$-saturated, as if $(G-v)+e$ does not contain $H$ as a minor, $G+e$ will not either, as no minor can have $v$ in its group. So $G-v$ has $n-1$ vertices and $m-2<\f{3}{2}(n-1)$ edges, so $G-v$ is a smaller counterexample. Hence we have show the first claim.

We now prove that $\sat(n,\M(H))$ if $\de(H)\ge 4$ and $H$ is triangle-free. Again let $G$ be a minor-minimal counterexample with $n\ge 5$ vertices and $m<2n$ edges; then there exists $v\in G$ so that $\deg(v)\le 3$. The cases of $\deg(v)=1$ and $2$ are as before, so suppose $\deg(v)=3$. Suppose that $v$ has neighbors $x,y,z$ and that $x$ and $y$ are not adjacent. Then $G+xy$ must contain a minor of $H$. Consider a partition of the vertices of $G$ which shows this, and let $v$ belong to $U$; since $\deg(H)\ge 4$, one of the neighbors of $U$ must as well. If $x$ or $y$ is part of $U$, the the minor also exists in $G$ (since the edge $xy$ will be redundant with $vx$ and $vy$). So $z$ must be part of $U$. If $v$ and $x$ are both part of group $V$, then moving $v$ to group $V$ and removing $xy$ shows the minor also appears in $G$. Otherwise, $x$ appears in $V_1$ and $y$ appears in $V_2$. Then the edge $V_1V_2$ must appear in $H$, so one of $V_1U$ or $V_2U$ does not (since the graph is triangle-free). If $V_1U$ does not appear then moving $v$ to $V_2$ again creates the minor in $G$.
\end{proof}

The example of $\sat(n,\M(K^5))=\f{11}{6}n+O(1)$ shows the the triangle-free requirement is necessary in the case of $\de(H)=4$; our upper bound for $\sat(n,\M(K^r))$ for $6\le r\le 8$ will show it is also necessary for $5\le \de(H)\le 7$.

Next we show a general upper bound.

\begin{thm}[Upper Bound]
Let $H$ be connected with $d:=\de(H)\ge 3$ and $s:=|V(H)|$. Then
$$\sat(n, \M(H))\le \ll((d-1)+\f{\bn{s-1}{2}+\bn{d}{2}-(d-1)(s-1)}{\bn{s-1}{d-1}+(s-1)-d}\rr)n+O(1)$$
\end{thm}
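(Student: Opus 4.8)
My approach is to prove this bound constructively, by exhibiting for every $n$ an $\M(H)$-saturated graph with the stated number of edges. The basic unit is a gadget $\Gamma$ built on a \emph{core} clique $K^{s-1}$: for each $(d-1)$-subset $T$ of the core I add one \emph{satellite} vertex joined to exactly the vertices of $T$, so that every satellite has degree $d-1$ and there are $\bn{s-1}{d-1}$ of them. The engine behind everything is the elementary observation that $K^{s-1}$ together with one further vertex adjacent to at least $d$ of its vertices already contains $H$ as a subgraph: since $\de(H)=d$ there is a vertex $h\in V(H)$ of degree exactly $d$, and I may send $h$ to the extra vertex, the $d$ neighbours of $h$ to $d$ neighbours of the extra vertex, and the remaining $s-1-d$ vertices of $H$ arbitrarily into the clique, every non-$h$ edge of $H$ being present because the core is complete. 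Thus, whenever an added edge lets some connected set reach $d$ vertices of a complete core, an $H$-minor materialises.

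First I would check that a single gadget is $\M(H)$-saturated on its own. For minor-freeness, an $H$-minor needs $s$ branch sets while the core has only $s-1$ vertices, so at least one branch set misses the core entirely; since the satellites form an independent set, such a branch set is a single satellite, adjacent only to its $d-1$ core neighbours and hence to fewer than $d$ other branch sets, contradicting $\de(H)\ge 4$ — more precisely $\de(H)\ge d$. For saturation, adding an edge from a satellite to a non-neighbouring core vertex gives that satellite $d$ core neighbours, and adding an edge between two satellites with distinct subsets $T_1\ne T_2$ lets them merge into one branch set reaching $|T_1\cup T_2|\ge d$ core vertices; in either case the engine produces an $H$-minor.

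Next I would assemble the $n$-vertex graph out of $\Theta(n)$ such gadgets, overlapping consecutive gadgets along a $d$-vertex interface, so that each successive gadget contributes $\bn{s-1}{d-1}+(s-1)-d$ new vertices and $(d-1)\bn{s-1}{d-1}+\bn{s-1}{2}-\bn{d}{2}$ new edges. A short computation then shows that the ratio of these two quantities is exactly $(d-1)+\f{\bn{s-1}{2}+\bn{d}{2}-(d-1)(s-1)}{\bn{s-1}{d-1}+(s-1)-d}$: indeed, multiplying the stated constant through by the denominator and using $\bn{d}{2}-d(d-1)=-\bn{d}{2}$ recovers the numerator above. Hence the assembled graph has $cn+O(1)$ edges for the claimed constant $c$, the boundary gadgets and interfaces contributing only the additive term, and the theorem follows once the graph is shown to be $\M(H)$-saturated.

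The main obstacle is exactly this last point, and it conceals a genuine tension. If consecutive \emph{complete} cores were simply identified along a clique $K^d$, then a private vertex of one core would see $d$ vertices of the neighbouring core and the engine would fire, so the assembled graph would already contain $H$; the interface therefore cannot be a shared clique between complete cores, which is why the count carries a $-\bn{d}{2}$ and the interface must be weakened (for instance by removing its internal edges). But the engine, and hence the saturation argument, wants the cores to be \emph{complete}, so the heart of the proof is reconciling these two demands — weakening the interface just enough to block every global $H$-minor while preserving enough completeness that each added edge still forces one. I would re-run the branch-set count globally, tracking how a branch set may straddle an interface and exploiting both $\de(H)\ge d$ and the sparsity of the interface, and I would handle saturation by routing connectivity through the interfaces: for a non-edge with endpoints in different gadgets, absorb the far endpoint and the intervening interface vertices into a single branch set and verify it always reaches a $d$-th vertex of one complete core. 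Making this work for \emph{every} non-edge, not merely for satellites of neighbouring gadgets, is the step I expect to demand the most careful case analysis.
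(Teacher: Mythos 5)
Your gadget is essentially the paper's and your arithmetic is correct, but the proof has a genuine gap exactly where you say it does: the assembly step. Gluing consecutive gadgets along a $d$-vertex interface whose internal edges have been deleted creates two problems that you name but do not solve. First, once the interface edges are gone the cores are no longer complete, so your ``engine'' (a complete $K^{s-1}$ plus a vertex seeing $d$ of its vertices contains $H$ as a subgraph) no longer applies verbatim, and saturation for edges added at or near an interface is not established. Second, and more seriously, a $d$-vertex interface gives no connectivity obstruction to an $H$-minor straddling two gadgets: since $\kappa(H)\le\delta(H)=d$, the graph $H$ may well admit a separation that fits across a $d$-vertex cut, so global minor-freeness cannot be dispatched by a local degree or branch-set count and would require precisely the global analysis you defer. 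These are not routine verifications; they are the content of the theorem.

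The paper resolves both difficulties at once by gluing differently. It omits the satellite on one distinguished $(d-1)$-subset of each core, and glues copies along a \emph{clique} of size $\kappa(H)-1$: a $K^{\kappa(H)-1}$ inside the (still complete) core when $2\le\kappa(H)\le d-1$, the distinguished satellite-free $(d-1)$-set when $\kappa(H)=d$, and disjoint unions when $\kappa(H)=1$. Because the shared set is a clique of size strictly less than $\kappa(H)$, any $H$-minor would have to lie inside a single copy, which is impossible; and because no edges are deleted, every core stays complete and every added edge can be contracted to an added edge inside one copy, so saturation is inherited from the one-gadget case. The arithmetic agrees with yours because the omitted satellite costs $1$ vertex and $d-1$ edges while the shared $K^{d-1}$ saves $d-1$ vertices and $\binom{d-1}{2}$ edges, and $(d-1)+\binom{d-1}{2}=\binom{d}{2}$ --- the same $-\binom{d}{2}$ you obtained by deleting interface edges, but realized without ever breaking a core. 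To complete your argument you would either have to carry out the straddling branch-set analysis you postpone (and it is unclear how saturation survives with incomplete cores), or switch to a sub-connectivity clique interface as the paper does.
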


\begin{proof} 
Create a graph $G$ by starting with $K^{s-1}$ and, for each subset of size $d-1$ except one, connecting a new vertex to every element in the set. Observe that $|V(G)|=(s-1)+\binom{s-1}{d-1}-1$ and $|E(G)|=\bn{s-1}{2}+(d-1)\bn{s-1}{d-1}-(d-1)$. $G$ does not contain $H$ as a minor, as only $s-1$ vertices of $G$ have degree at degree at least $d$, and contracting any of the edges edges not in the original $K^{s-1}$ cannot create an additional vertex of degree $d$. Now observe that $G$ is $\M(H)$-saturated: adding any edge will allow us to contract to a graph of $K^{s-1}$ with an additional vertex connected to $d$ elements; this clearly has $H$ as a subgraph.

We now create larger $\M(H)$-saturated graphs as follows. If $\ka(H)=1$, take disjoint unions of $G$. If $2\le \ka(H)\le d-1$, union multiple copies of $G$ along any common $K^{\ka(H)-1}$ within the $K^{s-1}$. If $\ka(H)=d$, then union them along the $d-1$ vertices which were not connected to an added vertex. Since we are gluing along common complete graphs of size at most $\ka(H)-1$, any $H$ minor must lie entirely within one copy of $G$, which is impossible. So this combined graph does not have any copies of $G$ as a minor. If we add an edge within one of the copies of $G$, we will create a minor of $H$ (since $G$ is $\M(H)$-saturated. If we add an edge between two different copies of $G$ that doesn't include a vertex in the common set, then we can contract this down to adding an additional edge to one of the copies of $G$. So the combined graph is also $\M(H)$-saturated.

So
\begin{align*}\sat(n, \M(H))&\le \f{\bn{s-1}{2}+(d-1)\bn{s-1}{d-1}-(d-1)-\bn{d-1}{2}}{(s-1)+\binom{s-1}{d-1}-1-(d-1)}n+O(1)\\
&= \ll((d-1)+\f{\bn{s-1}{2}+\bn{d}{2}-(d-1)(s-1)}{\binom{s-1}{d-1}+(s-1)-d}\rr)n+O(1)
\end{align*}
\end{proof}

In particular, the above bound shows that $\sat(n,\M(H))\le \de(H) n + O(1)$ when $\de(H)\ge 3$.

As an example of this upper bound, consider the graph $K_{3,3}$. By a theorem of Wagner, every edge-maximal graph with no $K_{3,3}$ minor can be constructed by recursively combining $K^5$ and maximal planar graphs along a common edge \cite{Wa}. This gives us that $\sat(n,\M(K_{3,3,}))= \f{9}{4}n+O(1)$. This matches the upper bound from Theorem 2.2.

We shall now consider the saturation number for $\M(K_{1,r})$.

\begin{prop}
$\sat(n,\M(K_{1,r}))\le n+O_r(1)$
\end{prop}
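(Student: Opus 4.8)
The plan is to exhibit an explicit edge-maximal $\M(K_{1,r})$-free graph with $n+O_r(1)$ edges. The starting point is a convenient reformulation of when a graph contains $K_{1,r}$ as a minor: if $S$ is any connected vertex set and $N(S)=\{v\notin S: v\text{ has a neighbor in }S\}$ is its set of external neighbors, then contracting $S$ to a single vertex and keeping $r$ members of $N(S)$ as singleton branch sets exhibits a $K_{1,r}$ minor, and conversely every $K_{1,r}$ minor arises this way from the branch set playing the role of the center. Thus $G$ is $\M(K_{1,r})$-free if and only if $|N(S)|\le r-1$ for every connected $S$, and adding an edge creates the minor exactly when it raises $\max_S|N(S)|$ to $r$. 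I would carry out the whole argument with this ``external neighbor'' bookkeeping.

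For the construction, start from a clique $K^{r-1}$ on vertices $u_1,\dots,u_{r-1}$ and attach a long \emph{ear}: a path $u_1-w_1-w_2-\cdots-w_L-u_2$ with $L=n-(r-1)$ internal vertices joining $u_1$ to $u_2$, while retaining the clique edge $u_1u_2$. Call this graph $G_0$; it has $n$ vertices and $\binom{r-1}{2}+(L+1)=n+O_r(1)$ edges. The key numerical feature is that $u_1,u_2$ have degree $r-1$, every other clique vertex has degree $r-2$, and every $w_i$ has degree $2$. I would first verify that $G_0$ is $\M(K_{1,r})$-free by bounding $|N(S)|$ over connected $S$: a single clique vertex has at most $r-1$ external neighbors, while any $S$ that absorbs several clique vertices turns their mutual (private) clique edges into internal edges and so has strictly fewer external neighbors, so only small blobs near a single clique vertex reach $r-1$. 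A careful case check should give $\max_S|N(S)|=r-1$, whence no $K_{1,r}$ minor is present.

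Now let $G$ be any edge-maximal $\M(K_{1,r})$-free graph containing $G_0$; such a $G$ is $\M(K_{1,r})$-saturated by definition, so it remains to bound $|E(G)|-|E(G_0)|$. The crucial point is that any non-edge touching a ``deep'' path vertex already forces a minor, hence cannot be added. Indeed, for a chord $w_iw_j$ with $i<j$ the set $S=\{u_1,w_1,\dots,w_i\}$ has as external neighbors the vertices $u_2,\dots,u_{r-1}$ (via $u_1$) together with $w_{i+1}$ and $w_j$, a total of $(r-2)+2=r$; likewise an edge from an interior $w_i$ to a clique vertex $u_k$ makes $\{u_k,w_i\}$ a set with $(r-2)+2=r$ external neighbors. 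So every addable edge must have both endpoints in the constant-size ``junction'' $\{u_1,\dots,u_{r-1},w_1,w_L\}$ together with the few path vertices adjacent to it, of which there are only $O_r(1)$. Consequently $|E(G)|\le |E(G_0)|+O_r(1)=n+O_r(1)$, which is the claimed bound.

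The step I expect to be most delicate is precisely this junction. Unlike deep chords, a non-edge such as $w_1u_k$ (joining the first path vertex to a far clique vertex) does \emph{not} create a minor, because $w_1$ already shares the neighbor $u_1$ with $u_k$'s clique and the count drops to $r-1$; hence $G_0$ is genuinely not saturated and the maximalization cannot be skipped. The real work is to show that adjoining these boundedly many junction edges (i)~can be pushed to maximality, (ii)~never resurrects a $K_{1,r}$ minor — one must avoid, for instance, joining both $w_1$ and $w_L$ to the \emph{same} $u_k$, which would give $u_k$ degree $r$ — and (iii)~cannot cascade outward to make deep edges addable, which should follow from the robustness of the $\{u_1,w_1,\dots,w_i\}$ computation (it is unaffected by extra edges incident to the clique). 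Establishing $\max_S|N(S)|=r-1$ for $G_0$ in the first place is the other computation-heavy point, since one must rule out connected sets that repeatedly dip into and out of the clique.
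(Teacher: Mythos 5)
Your argument is sound and would be complete once the two sketched verifications are written out, but it takes a noticeably different route from the paper's, so a comparison is worthwhile. The paper's construction is $K^r$ with the edge $u_1u_2$ \emph{replaced} by the long path, not your $K^{r-1}$ with an ear attached: there every clique vertex has degree exactly $r-1$, so any added edge meeting the clique immediately yields a vertex of degree $r$, any chord $w_iw_j$ contracts (via exactly your set $\{u_1,w_1,\dots,w_i\}$) to do the same, and the graph is $\M(K_{1,r})$-saturated as it stands. This eliminates your entire third step: no edge-maximalization and no counting of junction edges. Your version is nonetheless correct: the deep non-edges are blocked \emph{monotonically} (the witnessing connected set only gains external neighbors in a supergraph of $G_0$), so any edge-maximal $K_{1,r}$-minor-free supergraph of $G_0$ contains at most $2(r-3)$ extra edges, all among the pairs $u_kw_1$, $u_kw_L$ with $k\ge 3$, and your worries (i)--(ii) about \emph{which} of these actually get added are immaterial, since the saturated graph never needs to be exhibited explicitly. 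What your route buys is the clean certificate that a graph is $K_{1,r}$-minor-free iff every connected $S$ has $|N(S)|\le r-1$, which is tidier and more systematic than the paper's ad hoc branch-set analysis (the paper even misstates the minor there as ``$K^r$''). The one computation you must genuinely carry out is $\max_S|N(S)|\le r-1$ for $G_0$; it reduces to observing that a connected $S$ meeting the clique has at most $r-1-|S\cap\{u_1,\dots,u_{r-1}\}|$ external clique neighbors and at most one external path neighbor for each of $u_1,u_2$ it contains, and this check is needed in either version of the construction.
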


\begin{proof}
Take a $K^r$ with vertices $u_1,\dots,u_r$ and replace the edge $u_1u_2$ with an arbitrary long path. Suppose there is a $K^r$ minor in this graph; then there exist connected sets of vertices $U$ and $V_1,\dots,V_r$ so that there is an edge between each pair ($U$, $V_i$). At most two of the $V_i$ can come from along the path, so the other $r-2$ must be within the original $u_1,\dots,u_r$, meaning $V$ can only contain two elements from among these. If $U$ consists of a single element from the $u_i$, it can only be adjacent to at most $r-1$ other sets. If $U$  contains two elements, it can not contain both $u_1$ and $u_2$, so only one $V_i$ can appear along the path, so again $U$ can only be adjacent to $r-1$ other sets. Hence this graph does not contain $K_{1,r}$ as a minor.

Adding an edge that is incident to any of the $u_i$ means that $u_i$ will now have degree $r$. Otherwise, adding an edge between two vertices along the path will allow the graph to be contracted down to $K^r$ with an extra path of between $u_1,u_2$, so $u_1$ will have degree $r$. Hence this graph is $\M(K_{1,r})$-saturated.
\end{proof}

Finally, we consider complete graphs. Note that by Theorem 2.1, $\sat(n,MK^r)\ge \f{3}{2}n$ for $r\ge 4$. It turns out that, at least for $6\le r\le 8$, this lower bound is fairly close.

\begin{thm}
The following bounds hold:
$$\sat(n,\M(K^6))\le \f{23}{14}n+O(1)$$
$$\sat(n,\M(K^7))\le \f{19}{12}n+O(1)$$
$$\sat(n,\M(K^8))\le \f{14}{9}n+O(1)$$
\end{thm}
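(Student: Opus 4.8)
The plan is to prove each bound by exhibiting an explicit $\M(K^r)$-saturated graph of the claimed density, and the reference to Generalized Petersen Graphs in the abstract is the key hint. For each $r\in\{6,7,8\}$ I would take a $3$-regular backbone $GP(m,k_r)$ — outer cycle $v_0\cdots v_{m-1}$, inner edges $u_iu_{i+k_r}$, and spokes $v_iu_i$ — and superimpose a sparse periodic family of chords. A pure Generalized Petersen Graph is $3$-regular and contributes exactly $\tfrac32 n$ edges, so the excess of each target over $\tfrac32$ (namely $\tfrac17,\tfrac1{12},\tfrac1{18}$ for $r=6,7,8$) must be supplied by these chords, at a rate of about two chords per $14$ vertices for $K^6$, one per $12$ for $K^7$, and one per $18$ for $K^8$. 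I would specify the chords within a single fundamental domain of the right period and tile it cyclically; arbitrary $n$ is absorbed into the $O(1)$ term by padding with a bounded number of low-degree vertices.

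The first task is to check that $G_r$ has no $K^r$ minor. Here I would stress that treewidth is the wrong instrument — large planar grids have unbounded treewidth but no $K^5$ minor — and argue instead from the thin cyclic structure directly. With $k_r$ chosen small, cutting the backbone at a single index severs only $k_r+1$ backbone edges together with $O(1)$ chords. Passing to a minimal $K^r$ minor, each branch set is a small tree, and since every edge spans at most $k_r=O(1)$ indices, each branch set occupies only $O(r)$ indices; the $r$ branch sets together occupy $O(r^2)$ of the $m\gg r^2$ indices, so there are gaps between them. I would then cut at two gap positions, splitting the cycle into two arcs carrying $p$ and $r-p$ branch sets with none straddling, so that the $p(r-p)$ super-edges between the sides are realized by that many distinct crossing edges; balancing the cut to $p\approx r/2$ forces $r^2/4 \le 2k_r+O(1)$, a contradiction for the small $k_r$ we have fixed.

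The heart of the argument is saturation: for every non-adjacent pair $u,v$, the graph $G_r+uv$ must contain a $K^r$ minor in which the new edge supplies the single otherwise-missing adjacency. The chords are to be placed so that every window of the backbone already hosts $r-2$ pairwise-adjacent connected branch sets, each joined to the two short paths destined to become the sets containing $u$ and $v$; the one adjacency these two paths lack is exactly the one that $uv$ provides. I would carry this out as a case analysis on the relative position of $u$ and $v$ — inside one domain, in neighbouring domains, and far apart (routing one branch set along the backbone between them) — writing down the branch sets explicitly in each case. This is the main obstacle, because the construction must simultaneously be locally dense enough that every single added edge completes a minor and globally thin enough to carry none, and it is precisely the minimal chord density reconciling these demands that yields the constants $\tfrac{23}{14}$, $\tfrac{19}{12}$, $\tfrac{14}{9}$; I would also be careful to fold the added chords into the cut argument above so that they do not by themselves create a $K^r$ minor. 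Counting two vertices and three backbone edges per index plus each domain's chords, and dividing, then gives the three ratios, which together with the bound $\sat(n,\M(K^r))\ge\tfrac32 n$ from Theorem 2.1 confine the saturation number to a narrow interval.
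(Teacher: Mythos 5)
Your instinct that Generalized Petersen graphs are the relevant objects is correct, but the construction you build around them is not the paper's, and as proposed it has genuine gaps. The paper does not take one long cyclic backbone decorated with sparse periodic chords: it takes the three \emph{fixed} graphs $G(8,3)$, $G(13,5)$, $G(19,7)$ (on $16$, $26$, $38$ vertices with $24$, $39$, $57$ edges), shows each is $\M(K^r)$-saturated for $r=6,7,8$, and then produces arbitrarily large saturated graphs by amalgamating many copies along a single common edge. The constants are therefore not ``$\tfrac32$ plus a chord density''; each block stays exactly $3$-regular, and the excess over $\tfrac32$ comes entirely from the shared edge: $\tfrac{|E|-1}{|V|-2}=\tfrac{23}{14}$, $\tfrac{38}{24}=\tfrac{19}{12}$, $\tfrac{56}{36}=\tfrac{14}{9}$. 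Minor-freeness of each block is then a one-line edge count (a $K^6$ minor in a $16$-vertex graph needs at least $(16-6)+\binom{6}{2}=25$ edges, but $G(8,3)$ has only $24$), and saturation is checked by exhibiting explicit partitions into $r$ connected branch sets with exactly one missing adjacency, using the dihedral symmetry of $G(m,k)$ to reduce to finitely many added edges. Both steps depend on the blocks being small and on $k$ being close to $m/2$, so that the inner edges scramble the cyclic order and make the graph globally well connected.

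In your regime ($m$ large, $k$ fixed) both halves break. For minor-freeness, the claim that in a minimal model ``each branch set is a small tree'' occupying $O(r)$ indices is false: a minimal branch set is a tree with at most $r-1$ leaves, but such a tree can be a path winding a constant fraction of the way around the cycle (exactly as in the minimal $K_3$ minor of $C_m$), so the gaps you propose to cut through need not exist; and the easy edge count is unavailable since $3m+o(m)\gg (2m-r)+\binom{r}{2}$. For saturation --- which you rightly call the heart of the argument --- nothing is verified, and there is a structural reason to doubt it can be: adding a single edge $uv$ raises the number of edges crossing any index cut by at most one, so the very thinness you invoke to exclude $K^r$ minors from $G$ works almost as well against $G+uv$, and for $r\ge 6$ the quantity $\lfloor r/2\rfloor\lceil r/2\rceil$ comfortably exceeds $2(k+1)+O(1)+1$ for the small $k$ you intend to fix. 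A single long graph of bounded cyclic width cannot plausibly be both $K^r$-minor-free and $K^r$-minor-saturated; the paper sidesteps this entirely by keeping each building block small and dense relative to the threshold $(|V|-r)+\binom{r}{2}$ and pushing all of the asymptotics in $n$ into the edge-amalgamation, whose correctness is the standard clique-sum observation.
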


\begin{proof}
For all three upper bounds, we shall repeatedly union together multiple copies of a triangle-free $\M(K^r)$-saturated graph along a common edge. Note that such a graph cannot contain a $K^r$ minor, and must be $\M(K^r)$-saturated since any added edge can be contracted down to an added edge in one of the copies of the original graph. 

Let $G(n,k)$ be the graph with vertices $x_0,\dots, x_{n-1}$ and $y_0,\dots,y_{n-1}$ so that $x_i\sim x_{i+1}$, $x_i\sim y_i$, and $y_i\sim y_{i+k}$, where indices are taken modulo $k$. These graphs are known as Generalized Petersen Graphs. We claim that $G(8,3)$, $G(13,5)$, and $G(19,7)$ are $\M(K^r)$-saturated for $r=6,7$, and $8$ respectively. Note that unioning these along a common edge will give the desired upper bounds.

Firstly, suppose that $G(8,3)$ contained $K^6$ as a minor. Then we could divide the $16$ vertices into $6$ connected sets with an edge between each two pairs of sets. There would need to be $16-6=10$ edges within the connected components, and there would need to be $\binom{6}{2}=15$ edges between components; since there are only $24$ total edges, this is impossible. The same argument works for the $r=7$ and $8$ cases. 

It remains to show that adding any edge to these graphs will create the desired minor. Note first that due to the symmetry of Generalized Peterson Graphs, $G+x_0x_j$ and $G+x_ix_{i\pm j}$ are isomorphic for all $i,j$, and similarly for $G+x_0y_j$ and $G+y_0y_j$; this observation considerably cuts down the number of edges we must consider. We shall now show some divisions of the vertices into connected groups so that every pair of groups contain a pair of adjacent vertices but one. Consider the groupings below:

\begin{center}

\begin{tabular}{ccc}
\begin{tabular}[t]{|c|}\hline
$G(8,3)$\\\hline
$A_1: x_3, x_4, x_5$\\\hline
$A_2: y_1, y_3, y_6$\\\hline
$A_3: y_2, y_4, y_7$\\\hline
$A_4: x_6, x_7, x_0$\\\hline
$A_5: x_1, x_2$\\\hline
$A_6: y_0, y_5$\\\hline
\end{tabular}
&
\begin{tabular}[t]{|c|}\hline
$G(13,5)$\\\hline
$A_1: x_8,y_0,y_3,y_8$\\\hline
$A_2: x_9,y_9,y_4,y_1$\\\hline 
$A_3: x_{10},y_2,y_5,y_{10}$\\\hline
$A_4: x_0,x_1,x_2,x_{12}$\\\hline
$A_5: x_6,x_{11},y_6,y_{11}$\\\hline
$A_6: x_3,x_4,x_5$\\\hline
$A_7: x_7,y_7,y_{12}$\\\hline
\end{tabular}
&
\begin{tabular}[t]{|c|}\hline
$G(19,7)$\\\hline
$A_1: x_0, x_1, x_{18}, y_1,y_8$\\\hline
$A_2: x_2, x_3, x_4, y_2, y_3$\\\hline
$A_3: x_5, x_6, y_{6}, y_{13}, x_{13}$\\\hline
$A_4: x_{10}, y_{10}, y_{17}, x_{17},y_{5}$\\\hline
$A_5: x_{12}, y_0,y_7,y_{12},y_{14}$\\\hline
$A_6: x_{14}, x_{15}, x_{16}, y_{15},y_{16}$\\\hline
$A_7: x_7, x_8,x_9, y_{9}$\\\hline
$A_8: x_{11},y_{4}, y_{11}, y_{18}$\\\hline
\end{tabular}
\end{tabular}
\end{center}

First, consider $G(8,3)$. Every pair of groups contain adjacent vertices except for $A_5=\{x_1,x_2\}$ and $A_6=\{y_0,y_5\}$, which means there will be a $K^6$ minor if we add the edges $(x_1y_0\cong)x_0y_1$, $(x_2y_0\cong)x_0y_2$, $(x_2y_5\cong)x_0y_3$, or $(x_1y_5\cong)x_0y_4$. Furthermore, since $x_0$ and $y_2$ are adjacent to both $A_5$ and $A_6$, removing either from its respective groups and adding them to either $A_5$ or $A_6$ will connect the two and create a new pair of groups which is disconnected, so it will create different groupings with all but one pair of groups connected. By moving $x_0$ from $A_4$ to $A_6$ we will have a partition where every pair of groups is connected except for $A_4\bs\{x_0\}=\{x_6,x_7\}$ and $A_5=\{x_1,x_2\}$. So we cannot add $(x_7x_1\cong)x_0x_2$, $(x_6x_1\cong)x_0x_3$, or $(x_6x_2\cong)x_0x_4$. Finally, if you instead move $y_2$ from $A_3$ to $A_5$, then every pair is connected except $A_3\bs \{y_2\}=\{y_4,y_7\}$ and $A_6=\{y_0, y_5\}$, which eliminates the edges $(y_4y_5\cong)y_0y_1$, $(y_5y_7\cong)y_0y_2$, and $y_0y_4$. Hence we have exhausted every possible edge, so $G(8,3)$ is in fact $\M(K^6)$-saturated.

Now consider $G(13,5)$. As before, every group is connected and every pair of groups is contains an edge between them except for $A_6$ and $A_7$, and both pairs are adjacent to $y_4$ and $x_6$. From $A_6=\{x_3,x_4,x_5\}$ and $A_7=\{x_7,y_7,y_{12}\}$, we can't add $(x_5x_7\cong)x_0x_2$, $(x_4x_7\cong)x_0x_3$, $(x_3x_7\cong)x_0x_4$, $(x_5y_7\cong)x_0y_2$, $(x_4y_7\cong)x_0y_3$, $(x_3y_7\cong)x_0y_4$, $(x_4y_{12}\cong)x_0y_5$, $(x_5y_{12}\cong)x_0y_6$. By moving $x_6$ to $A_6$, every pair is connected but $A_5\bs\{x_6\}=\{x_{11},y_6,y_{11}\}$ and $A_7=\{x_7,y_7,y_{12}\}$, so we also can't add $(y_6x_7\cong)x_0y_1$, $(y_6y_7\cong)y_0y_1$, $(y_{11}y_7\cong)y_0y_4$, or $(y_6y_{12}\cong)y_0y_6$. By moving $x_6$ to $A_7$ instead, we can't connect $A_5\bs \{x_6\}=\{x_{11},y_6,y_{11}\}$ and $A_6=\{x_3,x_4,x_5\}$, which bans $(x_{11}x_3\cong)x_0x_5$ and $(x_{11}x_4\cong)x_0x_6$. Finally, by instead moving $y_4$ to $A_7$, every pair is connected except $A_2\bs \{y_4\}=\{x_9,y_9, y_1\}$ and $A_7=\{x_7,y_7,y_{12}\}$, we also can't add $(y_9y_7\cong)y_0y_2$ or $(y_9y_{12}\cong)y_0y_{3}$. Again, we have exhausted every possible edge, so $G(13,5)$ is $\M(K^7)$-saturated.

Finally, consider $G(19,7)$. Once again, every group is connected and every pair contains an edge between them except for $A_7$ and $A_8$, which are also both adjacent to $x_{10}$ and $y_{16}$. From $A_7=\{x_7,x_8,x_9,y_9\}$ and $A_8=\{x_{11},y_{4},y_{11},y_{18}\}$, we can not add $(x_9x_{11}\cong)x_0x_2$, $(x_8x_{11}\cong)x_0x_3$, $(x_7x_{11}\cong)x_0x_4$, $(x_9y_{11}\cong)x_0y_2$, $(x_8y_{11}\cong)x_0y_3$, $(x_7y_{11}\cong)x_0y_4$, $(x_9y_4\cong)x_0y_5$, $(x_7y_{18}\cong)x_0y_8$, $(x_8y_{18}\cong)x_0y_9$, $(y_9y_{11}\cong)y_0y_2$, $(y_9y_4\cong)y_0y_5$, $(y_9y_{18}\cong)y_0y_9$. By moving $y_{16}$ from $A_6$ to $A_7$, every pair of sets is connected except $A_6\bs\{y_{16}\}=\{x_{14},x_{15},x_{16},y_{15}\}$ and $A_8=\{x_{11},y_{4},y_{11},y_{18}\}$, which eliminates the edges $(x_{16}x_{11}\cong)x_0x_5$, $(x_{16}y_4\cong)x_0y_7$, $(y_{15}y_{18}\cong)y_0y_3$, $(y_{15}y_{11}\cong)y_0y_4$, $(y_{15}y_4\cong)y_0y_8$. By moving $y_{16}$ from $A_6$ to $A_8$ instead we have the sets $A_6\bs \{y_{16}\}=\{x_{14},x_{15},x_{16},y_{15}\}$ and $A_7=\{x_7,x_8,x_9,y_9\}$. This bans $(x_{15}x_9\cong)x_0x_6$, $(x_{15}x_8\cong)x_0x_7$, $(x_{15}x_7\cong)x_0x_8$, $(x_{16}x_7\cong)x_0x_9$, $(x_{15}y_{9}\cong)x_0y_6$, and $(y_{15}y_9\cong)y_0y_6$.
Finally, by moving $x_{10}$ from $A_4$ to $A_7$, we have $A_4\bs\{x_{10}\}=\{x_{17},y_{10},y_{17},y_5\}$ and $A_8=\{x_{11},y_{4},y_{11},y_{18}\}$; this bans $(x_{17}y_{18}\cong)x_0y_1$ and $(y_{17}y_{18}\cong)y_0y_1$.
\end{proof}

Note that our construction actually shows $\sat(16,\M(K^6))=24$, and similar values for $K^7$ and $K^8$.
Our construction here is rather similar to the case of $K^5$, where $\M(K^5)$-saturated graphs are constructed by gluing copies of the Wagner graph along a common edge. However, it seems difficult to raise the corresponding lower bound, as the class of graphs with no $K^6$ minor has much greater variety than that of $K^5$ and is still not very well understood.

\section{Conclusion}

Our upper bounds for $\sat(n,\M(K_{1,r}))$ and $\sat(n,\M(K_r))$ provide some interested comparisons with traditional saturation numbers. While $\sat(n, \M(K_{1,r}))$ is at most $n+O(1)$, the traditional $\sat(n,K_{1,r})=\f{r-1}{2}n+O(1)$, as is $\sat(n,\S(K_{1,r}))$. Similarly, while $\sat(n,K^r)=(r-2)n+O(1)$, the values for the corresponding minors are less than $2$ and decreasing for $5\le r\le 8$. It does not seem to be known even whether for all $C$, there exist graphs with $\sat(n,\M(H))\ge Cn+O(1)$, or whether some linear constant $C$ holds as an upper bound for all minors. The author feels this question merits further investigation.


\end{document}